\documentclass[12pt]{article}
\overfullrule =0pt
\usepackage{graphicx}
\usepackage{subfigure}
\usepackage{amssymb,amsmath,amsthm,epsfig}
\usepackage{latexsym, enumerate}
\usepackage{eepic}
\usepackage{epic}
\usepackage{graphicx}
\usepackage{color}
\usepackage{ifpdf}
\usepackage{booktabs}

\usepackage{dsfont}
\usepackage{multirow}
\usepackage{bm}

\topmargin -0.5in
\textheight 9.0in
\textwidth 6.5in
\oddsidemargin 0.0in
\evensidemargin 0.0in

\theoremstyle{plain}
\newtheorem{lem}{Lemma}[section]
\newtheorem{thm}[lem]{Theorem}

\theoremstyle{definition}

\theoremstyle{remark}
\newtheorem{rem}{Remark}[section]
\newtheorem{exm}{Example}[section]

\begin{document}
\title{ \large\bf On uniqueness and nonuniqueness for potential reconstruction in quantum fields from one measurement}

\author{
Guang-Hui Zheng\thanks{Corresponding author. College of Mathematics and Econometrics, Hunan University, Changsha 410082, Hunan Province, China. Email: zhenggh2012@hnu.edu.cn}
\and
Zhi-Qiang Miao\thanks{College of Mathematics and Econometrics, Hunan University, Changsha 410082, Hunan Province, China. Email: zhiqiang\_miao@hnu.edu.cn}
}

\date{}
\maketitle

\begin{abstract}
  This paper studies uniqueness and nonuniqueness for potential reconstruction from one boundary measurement in quantum fields, associated with the steady state Schr\"{o}dinger equation. A uniqueness theorem of the inverse problem is established. In the meanwhile, a nonuniqueness theorem is also given when different potential and shape are considered. Finally, Tikhonov regularization method is applied to solve the reconstruction problem, and some numerical examples are presented to confirm the theoretical results and the effectiveness of the proposed method.
\end{abstract}\smallskip

\smallskip
{\bf keywords}: Potential reconstruction, Schr\"{o}dinger equation, Dirichlet-to-Neumann map, modified Bessel function.

\section{Introduction}
Let $\Omega$ be an open bounded domain in $\mathbb{R}^2$ containing origin possibly with multiple components with a
smooth boundary $\partial\Omega$, and $\nu$ be the outward unit normal vector to $\partial\Omega$.  Then we consider the steady state Schr\"{o}dinger equation \cite{Cohen-Tannoudji-1991} as follows
\begin{eqnarray}\label{1.1}
\left(-\frac{\hbar^2}{2m}\Delta+U(x)\right)\psi=E\psi,\ \ \ \ \ \text{in}\ \Omega,
\end{eqnarray}
with Dirichlet boundary condition
\begin{equation}\label{1.2}
\psi=f,\ \ \ \ \ \ \text{on}\ \partial\Omega,
\end{equation}
where $\hbar$, $m$ denote the reduced Planck's constant and the mass of particles respectively, $U(x)$ is the potential, $E$ is the energy value, and the solution $\psi(x)$ is called de Broglie's matter wave.

In this paper, we focus on solving the following inverse problem,

\emph{Inverse problem}: For arbitrary fixed energy value $E>0$, recover the potential $U(x)$ from one boundary measurement $\frac{\partial\psi}{\partial\nu}\bigg|_{\partial\Omega}$.

The above inverse problem is also related closely with the classical Calder\'{o}n problem, which was formulated by Calder\'{o}n in \cite{Calderon1980}. In 1987, Sylvester and Uhlmann proved the uniqueness with many boundary measurements in $\mathbb{R}^3$ for $U\in C(\overline{\Omega})$. Nowadays there are many generalizations concerning this result. For example, the case of partial boundary measurements $\left(\psi|_{\Gamma},\ \frac{\partial\psi}{\partial\nu}\bigg|_{\Gamma}\right)$, here $\Gamma$ is an arbitrary fixed open set of $\subseteq\partial\Omega$. One can refer to \cite{A.L.BUKHGEIM and G.UHLMANN-2002, Isakov2007, Imanuvilov2010, Imanuvilov2013} and a survey paper \cite{Kenig2014}. For the uniqueness results of Calder\'{o}n problem with single boundary measurement, to our knowledge, the first result is given by Isakov in 1989 \cite{Iskov1989}. Recently, Alberti and Santacesaria established the uniqueness, stability estimates and reconstruction algorithm for determining the potential in (\ref{1.1})-(\ref{1.2}) from a finite number of boundary measurements \cite{Alberti2018}. The studies of nonuniqueness are closely connected to the researches about invisibility \cite{Greenleaf2003, Greenleaf2003-1, Greenleaf2007, Greenleaf2009} and virtual reshaping \cite{Liu2009}. In particular,  Greenleaf, Lassas and Uhlmann construct some counterexamples to uniqueness in Calder\'{o}n problem by transformation optics \cite{Greenleaf2003, Greenleaf2003-1}. Moreover, in \cite{Liu2009}, Liu also used transformation optics to reshape an obstacle in acoustic and electromagnetic scattering.

More specifically, we consider the determination of piecewise constant potential in the unit disc. based on the analytic formula of solution, and notice the monotonicity of modified Bessel function, we prove that only one boundary measurement can recover the potential uniquely. Furthermore, by choosing appropriate potential and radius of the core for different core-shell structure, the boundary data $\frac{\partial\psi}{\partial\nu}\bigg|_{\partial\Omega}$ can be concordant. In other words, one boundary data are not able to determine the shape of core and potential simultaneously. Finally, we applied Tikhonov regularization method to recover the potential, and some numerical examples are given to verify the corresponding theoretical results.

This paper is organized as follows :
In section 2, the solution formula of the forward problem and the associated Dirichlet to Neumann map are given. The uniqueness theorem of potential reconstruction from one boundary measurement is established in section 3. The nonuniqueness result is obtained in section 4. In section 5, we present some numerical results to confirm the theoretical analysis, and a conclusion is given in Section 6. 
%
%

\section{Solution formula and Dirichlet to Neumann map}
In this section, based on the polar coordinate transformation, we deduce the exact solution formula of (\ref{1.1})-(\ref{1.2}) in core-shell structure, and define the Dirichlet to Neumann map (DN map).

Multiplying each side by $-\frac{2m}{\hbar^2}$ in (\ref{1.1}), we have
\begin{equation}\label{2.1}
  \Delta\psi(x)-\widetilde{U}(x)\psi(x)=-\widetilde{E}\psi(x),
\end{equation}
where $\widetilde{U}(x)=\frac{2m}{\hbar^2}U(x)$, $\widetilde{E}=\frac{2m}{\hbar^2}E$.
Let $\Omega$ be an annulus of radius $r_1$ and $1$ (core-shell structure), and the potential $U(x)$ be a piecewise constant function, i.e.
\begin{eqnarray}\label{2.3}
\widetilde{U}(x)=
\begin{cases}
\widetilde{U}_1,\ \ \ \ |x|<r_1,\\
\widetilde{U}_2,\ \ \ \ r_1<|x|<1.
\end{cases}
\end{eqnarray}
For simple, we set $\widetilde{U}_1=\widetilde{E}+\sigma_1^{-1}$, $\sigma_1>0$, and $\widetilde{U}_2=\widetilde{E}+1$. Then, under the polar coordinates, (\ref{1.1}) becomes
\begin{eqnarray}\label{2.4}
\frac{1}{r}\frac{\partial}{\partial r}\left(r\frac{\partial\psi}{\partial r}\right)-\left(\sigma_1^{-1}\chi_{\{r<r_1\}}+1\chi_{\{r_1<r<1\}}\right)\psi=0,\ \ \ \ \ r\in(0,1).
\end{eqnarray}
Assume that $\psi|_{r=0}$ is bounded. The corresponding DN map can be written as
\begin{equation}\label{2.5}
\Lambda_{\sigma_1,r_1}:\ \mathbb{C}\ni\psi|_{r=1}\ \mapsto\ \frac{\partial\psi}{\partial r}\bigg|_{r=1}\in\mathbb{C}.
\end{equation}
Furthermore, setting $\psi|_{r=1}=f$ and $\frac{\partial\psi}{\partial r}\big|_{r=1}=g$, the DN map (\ref{2.5}) can be represented
by solving problem (\ref{2.4}) with Dirichlet boundary condition $\psi|_{r=1}=f$:
\begin{eqnarray}\label{2.6}
\begin{cases}
r^{-1}\frac{\partial}{\partial r}\left(r\frac{\partial\psi}{\partial r}\right)-\sigma_1^{-1}\psi=0,\ \ \ \ \ r\in(0,r_1),\\
r^{-1}\frac{\partial}{\partial r}\left(r\frac{\partial\psi}{\partial r}\right)-\psi=0,\ \ \ \ \ r\in(r_1,1),\\
\psi|_{r=r_1}^+=\psi|_{r=r_1}^-,\\
\frac{\partial\psi}{\partial r}\big|_{r=r_1}^+=\sigma_1\frac{\partial\psi}{\partial r}\big|_{r=r_1}^-,\\
\psi|_{r=1}=f,\\
\psi|_{r=0}\ \ \text{is bounded}.
\end{cases}
\end{eqnarray}
where $\cdot|_{r=r_1}^+$ means the limit to the outside of $\{r|r=r_1\}$ and $\cdot|_{r=r_1}^-$ means the limit to the inside of $\{r|r=r_1\}$.

By the boundedness of $\psi(0)$, we suppose that the matter wave $\psi(r)$ has the form
\begin{eqnarray}\label{2.7}
\psi(r)=
\begin{cases}
a_0I_0\left(\frac{r}{\sqrt{\sigma_1}}\right),\ \ \ \ \ r\in(0,r_1),\\
a_1I_0\left(r\right)+b_1K_0\left(r\right),\ \ \ \ \ r\in(r_1,1),
\end{cases}
\end{eqnarray}
where $I_n\left(r\right)$ and $K_n\left(r\right)$ $(n\in\mathbb{N})$ denote the $n$-th order modified Bessel functions of the first and the second kind, respectively.
$a_0$, $a_1$, $b_1$ are unknown coefficients.

From the transmission conditions on the interface $\{r|r=r_1\}$ and boundary value condition on $\{r|r=1\}$, we have that
\begin{eqnarray}\label{2.8}
\begin{cases}
a_0I_0\left(\frac{r_1}{\sqrt{\sigma_1}}\right)=a_1I_0\left(r_1\right)+b_1K_0\left(r_1\right),\\
a_0\sqrt{\sigma_1}I_1\left(\frac{r_1}{\sqrt{\sigma_1}}\right)=a_1I_1\left(r_1\right)-b_1K_1\left(r_1\right),\\
a_1I_0\left(1\right)+b_1K_0\left(1\right)=f,
\end{cases}
\end{eqnarray}
here we used the differential formula $I_0'\left(r\right)=I_1\left(r\right)$, $K_0'\left(r\right)=-K_1\left(r\right)$.

By solving (\ref{2.8}), we obtain
\begin{eqnarray}\label{2.9}
\begin{cases}
a_0=\frac{\left(\rho(r_1,\sigma_1)K_0(r_1)-I_0(r_1)\right)f}{\left(\rho(r_1,\sigma_1)K_0(1)-I_0(1)\right)I_0
\left(\frac{r_1}{\sqrt{\sigma_1}}\right)},\\
a_1=-\frac{f}{\rho(r_1,\sigma_1)K_0(1)-I_0(1)},\\
b_1=\frac{\rho(r_1,\sigma_1)f}{\rho(r_1,\sigma_1)K_0(1)-I_0(1)},
\end{cases}
\end{eqnarray}
where
\begin{eqnarray}\label{2.10}
\rho(r_1,\sigma_1)=\frac{\sqrt{\sigma_1}I_1\left(\frac{r_1}{\sqrt{\sigma_1}}\right)I_0\left(r_1\right)-I_0\left(\frac{r_1}
{\sqrt{\sigma_1}}\right)I_1\left(r_1\right)}{\sqrt{\sigma_1}I_1\left(\frac{r_1}{\sqrt{\sigma_1}}\right)K_0\left(r_1\right)+I_0\left(\frac{r_1}
{\sqrt{\sigma_1}}\right)K_1\left(r_1\right)}.
\end{eqnarray}
Hence, by substituting the coefficient formula (\ref{2.9}) into (\ref{2.7}), we get the solution of problem (\ref{2.6}). Finally, the DN map can be expressed precisely as follows,
\begin{equation}\label{2.11}
\Lambda_{\sigma_1,r_1}(f)=-\frac{\rho(r_1,\sigma_1)K_1(1)+I_1(1)}{\rho(r_1,\sigma_1)K_0(1)-I_0(1)}f.
\end{equation}
Clearly, $\Lambda_{\sigma_1,r_1}:\ \mathbb{C}\rightarrow\mathbb{C}$ is a multiplier operator, and its operator norm is defined by
\begin{equation}\label{2.111}
\|\Lambda_{\sigma_1,r_1}\|=\sup_{f\in\mathbb{C}}\frac{|\Lambda_{\sigma_1,r_1}(f)|}{|f|}
\end{equation}
From (\ref{2.11}), it implies
\begin{equation}\label{2.112}
\|\Lambda_{\sigma_1,r_1}\|=\left|\frac{\rho(r_1,\sigma_1)K_1(1)+I_1(1)}{\rho(r_1,\sigma_1)K_0(1)-I_0(1)}\right|.
\end{equation}

Next, we define the following DN map for Schr\"{o}dinger equation in a disk:
\begin{equation}\label{2.12}
\Lambda(f)=\frac{\partial\Psi}{\partial r}\bigg|_{r=1},
\end{equation}
where $\Psi$ is the solution to
\begin{eqnarray}\label{2.13}
\begin{cases}
r^{-1}\frac{\partial}{\partial r}\left(r\frac{\partial\Psi}{\partial r}\right)-\Psi=0,\ \ \ \ \ r\in(0,1),\\
\Psi|_{r=1}=f,\\
\Psi|_{r=0}\ \ \text{is bounded}.
\end{cases}
\end{eqnarray}
Similarly, the DN map (\ref{2.12}) can be also represented by
\begin{equation}\label{2.14}
\Lambda(f)=\frac{I_1(1)}{I_0(1)}f.
\end{equation}

\section{Uniqueness for the potential reconstruction in core-shell structure}
The main purpose of this section is to establish the uniqueness theorem for the potential reconstruction problem from one boundary measurement, i.e., determining the piecewise constant potential in the core-shell structure by $\frac{\partial\psi}{\partial r}\bigg|_{r=1}$. First, we give the asymptotic property of DN map respect to the radius of core $r_1$ and potential coefficient $\sigma_1$.
\begin{thm}\label{th1}
Let $\Lambda_{\sigma_1,r_1}$, $\Lambda$ are defined by (\ref{2.5}) and (\ref{2.12}) respectively. Then,\\
(1)\ for any fixed $r_1\in(0,1)$, we have that
\begin{equation}\label{2.15}
\|\Lambda_{\sigma_1,r_1}-\Lambda\|\rightarrow0,\ (as\ \sigma_1\rightarrow1);
\end{equation}
(2)\ for any fixed $\sigma_1>0$, we have that
\begin{equation}\label{2.151}
\|\Lambda_{\sigma_1,r_1}-\Lambda\|\rightarrow0,\ (as\ r_1\rightarrow0);
\end{equation}
\end{thm}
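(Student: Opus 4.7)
The plan is to reduce both statements to showing that $\rho(r_1,\sigma_1)\to 0$ in the appropriate limit, by first writing the operator-norm difference in closed form.

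Since both $\Lambda_{\sigma_1,r_1}$ and $\Lambda$ are scalar multipliers, I would subtract (\ref{2.11}) from (\ref{2.14}), put everything over the common denominator $I_0(1)\bigl(\rho(r_1,\sigma_1)K_0(1)-I_0(1)\bigr)$, and invoke the Wronskian identity $I_0(x)K_1(x)+I_1(x)K_0(x)=1/x$ at $x=1$. The cross-term $I_0(1)I_1(1)$ cancels, and what survives in the numerator is exactly $\rho(r_1,\sigma_1)\cdot\bigl(I_0(1)K_1(1)+I_1(1)K_0(1)\bigr)=\rho(r_1,\sigma_1)$. Consequently,
\begin{equation*}
\|\Lambda_{\sigma_1,r_1}-\Lambda\|=\frac{|\rho(r_1,\sigma_1)|}{|I_0(1)|\,\bigl|\rho(r_1,\sigma_1)K_0(1)-I_0(1)\bigr|}.
\end{equation*}
The denominator is continuous in $(r_1,\sigma_1)$ and reduces to $I_0(1)^2\neq 0$ whenever $\rho=0$, so it is bounded below by a positive constant on a neighborhood of every limit point considered. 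Therefore it suffices to show $\rho(r_1,\sigma_1)\to 0$ in each regime.

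For part (1), I would just plug $\sigma_1=1$ into the numerator of $\rho$ in (\ref{2.10}): it becomes $I_1(r_1)I_0(r_1)-I_0(r_1)I_1(r_1)=0$, while the denominator equals $I_1(r_1)K_0(r_1)+I_0(r_1)K_1(r_1)=1/r_1\neq 0$ by the same Wronskian. Continuity of $\rho$ in $\sigma_1$ at $\sigma_1=1$ (both numerator and denominator are entire functions of $\sqrt{\sigma_1}$ and the denominator does not vanish there) yields $\rho(r_1,\sigma_1)\to 0$ as $\sigma_1\to 1$.

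For part (2), I would use the small-argument expansions $I_0(x)=1+x^2/4+O(x^4)$, $I_1(x)=x/2+x^3/16+O(x^5)$, $K_0(x)=O(\ln x)$ and $K_1(x)\sim 1/x$. The leading terms $r_1/2$ of $\sqrt{\sigma_1}I_1(r_1/\sqrt{\sigma_1})I_0(r_1)$ and $I_0(r_1/\sqrt{\sigma_1})I_1(r_1)$ cancel; tracking the next-order terms gives that the numerator of $\rho$ equals $\frac{(\sigma_1-1)r_1^3}{16\sigma_1}+O(r_1^5)$, while the denominator is dominated by $I_0(r_1/\sqrt{\sigma_1})K_1(r_1)\sim 1/r_1$. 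Thus $\rho(r_1,\sigma_1)=O(r_1^4)\to 0$ as $r_1\to 0$.

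The algebra of Step 1 and the continuity argument of part (1) are routine; the only delicate piece is the leading-order cancellation in part (2), which forces one to keep the $O(r_1^3)$ correction in $I_1$ and the $O(r_1^2)$ correction in $I_0$. Reassuringly, the surviving coefficient carries the factor $(\sigma_1-1)$, which is consistent with $\rho$ vanishing identically on the line $\sigma_1=1$ and so links the two parts of the theorem.
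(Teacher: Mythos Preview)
Your proof is correct and rests on the same ingredients the paper uses---continuity in $\sigma_1$ for part (1) and the small-argument expansions (\ref{2.16}) of $I_0,I_1,K_0,K_1$ for part (2). The paper's own argument is a bare sketch: for (1) it merely invokes the definitions, and for (2) it quotes the asymptotics and leaves the rest to the reader. Your organization is tighter: by first subtracting (\ref{2.11}) and (\ref{2.14}) and applying the Wronskian $I_0K_1+I_1K_0=1/x$ at $x=1$, you factor $\|\Lambda_{\sigma_1,r_1}-\Lambda\|$ as $|\rho|$ over a quantity bounded away from zero, so both parts reduce to the single task $\rho(r_1,\sigma_1)\to 0$. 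The paper never isolates $\rho$ in this way. What your route buys is a transparent explanation of \emph{why} the limit is zero (the $(\sigma_1-1)$ factor you observe in the $O(r_1^3)$ coefficient ties the two parts together), at the mild cost of doing the Wronskian simplification up front; the paper's route is shorter on the page but leaves all of that algebra implicit.
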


\begin{proof}

(1)\ It is a straightforward consequence of the definitions of operator norm for the DN map $\Lambda_{\sigma_1,r_1}$ and $\Lambda$.

(2)\ Notice that the following asymptotic behavior of modified Bessel function \cite{Abramowitz1964}:
\begin{eqnarray}\label{2.16}
\begin{cases}
I_0(r)=1+\frac{\frac{1}{4}r^2}{(1!)^2}+o(r^2),\\
I_1(r)=\frac{1}{2}r+\frac{\frac{1}{4}r^3}{(2!)^2}+o(r^3),\\
K_0(r)=-\{\ln(\frac{1}{2}r)+\gamma\}I_0(r)+\frac{\frac{1}{4}r^2}{(1!)^2}+o(r^2),\\
K_1(r)=\frac{1}{r}I_0(r)+\{\ln(\frac{1}{2}r)+\gamma\}I_1(r)+o(r),\\
\end{cases}
\end{eqnarray}
where $\gamma$ is the Euler constant. A combination of (\ref{2.16}) and the definitions of operator norm for the DN map yields (\ref{2.151}).

\end{proof}

The following lemma was given in \cite{Segura2011} (page 526, (72)).
\begin{lem}\label{lem1}
For $n\geq0$ and $r>0$ the following holds:
\begin{equation}\label{2.17}
r\frac{I_n'(r)}{I_n(r)}>\sqrt{(n+1)^2+r^2}-1
\end{equation}
\end{lem}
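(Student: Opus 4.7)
My plan is to reduce (\ref{2.17}) to a comparison of solutions of a first-order Riccati ODE. First I would introduce $u_n(r) := r I_n'(r)/I_n(r)$. Substituting $I_n' = (u_n/r) I_n$ into the modified Bessel equation $r^2 I_n'' + r I_n' - (r^2+n^2) I_n = 0$ and simplifying shows that $u_n$ satisfies
\begin{equation*}
r u_n'(r) \;=\; r^2 + n^2 - u_n(r)^2, \qquad u_n(0)=n.
\end{equation*}
Setting $v(r) := \sqrt{(n+1)^2+r^2}-1$, the desired inequality (\ref{2.17}) is exactly $u_n(r) > v(r)$ for $r>0$.

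The second step is to check that $v$ is a strict subsolution of the same Riccati equation, i.e.\ $r v'(r) < r^2 + n^2 - v(r)^2$. Writing both sides in closed form in terms of $\phi(r)=\sqrt{(n+1)^2+r^2}$ gives $r v' = r^2/\phi$ and $r^2+n^2-v^2 = 2(\phi - n-1)$, and the inequality reduces, after clearing the positive factor $\phi$ and squaring, to $r^4>0$. So the subsolution property holds strictly for every $r>0$; this step is purely algebraic.

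The third step is the comparison. Setting $w := u_n - v$, subtracting the equation for $u_n$ from the strict inequality for $v$ yields $r w'(r) + (u_n(r)+v(r))\,w(r) > 0$. Since $u_n, v \geq 0$, at any point $r_0 > 0$ where $w(r_0)=0$ we would have $w'(r_0)>0$, so $w$ cannot return to zero once it has become positive. Combined with the initial positivity established below, this yields $w>0$ on $(0,\infty)$.

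The main obstacle is the degeneracy $u_n(0)=v(0)=n$: a direct comparison principle fails, and in fact both Taylor expansions agree through order $r^2$, both equal to $n + r^2/[2(n+1)]$. To seed the barrier argument I would push the expansion one order further. Solving the Riccati ODE recursively determines the $r^4$-coefficient of $u_n$ to be $-1/[8(n+1)^2(n+2)]$, while expanding $v$ directly gives the corresponding coefficient $-1/[8(n+1)^3]$; their difference is $1/[8(n+1)^3(n+2)]>0$, hence $w(r)>0$ on some interval $(0,\delta)$. The ODE-barrier argument of the previous paragraph then propagates the strict inequality to all of $(0,\infty)$, completing the plan.
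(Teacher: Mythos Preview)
Your argument is correct. Note, however, that the paper does not actually supply a proof of this lemma: it simply quotes the inequality from Segura~\cite{Segura2011}. What you have written is therefore a self-contained proof where the paper defers to a reference.

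Your approach---passing to the logarithmic derivative $u_n(r)=rI_n'(r)/I_n(r)$, deriving the Riccati equation $r u_n' = r^2+n^2-u_n^2$, and exhibiting $v(r)=\sqrt{(n+1)^2+r^2}-1$ as a strict subsolution---is precisely the method Segura uses to obtain bounds of this type, so in spirit you are reproducing the cited source rather than giving a genuinely different route. The algebra for the subsolution check and the fourth-order Taylor comparison (coefficients $-1/[8(n+1)^2(n+2)]$ versus $-1/[8(n+1)^3]$) are both verified, and the barrier argument is clean: at a first zero $r_0$ of $w=u_n-v$ one would need $w'(r_0)\le 0$, contradicting $r_0 w'(r_0)>0$. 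One minor remark: the clause ``since $u_n,v\ge 0$'' is not actually needed for the contradiction at a zero of $w$ (the $(u_n+v)w$ term vanishes there regardless of sign), so you could drop it or keep it as background information.
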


\begin{lem}\label{lem2}
For $\eta>0$ and $r>0$ the function:
\begin{equation}\label{2.18}
F(\eta)=\eta\frac{I_1(\eta^{-1}r)}{I_0(\eta^{-1}r)}
\end{equation}
is strictly monotone increasing respect to $\eta$.
\end{lem}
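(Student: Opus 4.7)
The plan is to verify $F'(\eta) > 0$ pointwise for $\eta > 0$. Setting $u = r/\eta$ and $g(u) = I_1(u)/I_0(u)$, a direct differentiation of $F(\eta) = \eta\, g(u)$ via the chain rule (with $du/d\eta = -u/\eta$) gives
$$F'(\eta) = g(u) - u\, g'(u),$$
so it suffices to show the one-variable inequality $g(u) - u\, g'(u) > 0$ for every $u > 0$.

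To make this concrete, I would compute $g'(u)$ explicitly. Using $I_0'(u) = I_1(u)$ together with the standard recurrence $I_1'(u) = I_0(u) - u^{-1} I_1(u)$, the quotient rule yields, after a short simplification,
$$g'(u) = 1 - \frac{g(u)}{u} - g(u)^2.$$
Substituting this back into $F'(\eta) = g(u) - u\, g'(u)$ collapses the inequality $F'(\eta) > 0$ to the quadratic inequality
$$u\, g(u)^2 + 2\, g(u) - u > 0.$$

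Viewed as a quadratic in $g$ with positive leading coefficient $u$, its unique positive root is $(\sqrt{1+u^2} - 1)/u$. Hence the required inequality is equivalent to
$$\frac{I_1(u)}{I_0(u)} > \frac{\sqrt{1+u^2} - 1}{u},$$
which, after multiplying by $u$ and using $I_0'(u) = I_1(u)$, is exactly Lemma \ref{lem1} applied with $n=0$. This closes the argument.

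The potentially delicate point is that a soft bound on $g(u)$ — for instance the trivial $g(u) < 1$ — would not suffice, since the quadratic $u g^2 + 2g - u$ is negative for $g$ below a specific threshold. The key observation is that Lemma \ref{lem1} delivers precisely the sharp threshold $(\sqrt{1+u^2}-1)/u$ needed, so the estimate is tight enough to conclude without any additional work. The rest is a routine chain-rule calculation and Bessel recurrence manipulation.
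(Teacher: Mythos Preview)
Your proof is correct and follows essentially the same route as the paper: compute $F'(\eta)$, reduce it to the quadratic $u\,g(u)^2 + 2g(u) - u$ in $g=I_1/I_0$, and invoke Lemma~\ref{lem1} with $n=0$ to see that $g(u)$ exceeds the positive root $(\sqrt{1+u^2}-1)/u$. The paper presents the same computation in factored form, writing $F'(\eta)=u(g-\theta_+)(g-\theta_-)$ with $\theta_\pm=u^{-1}(-1\pm\sqrt{1+u^2})$ and checking each factor is positive, but this is just the factorization of your quadratic.
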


\begin{proof}
By direct calculation, we derive that
\begin{equation}\label{2.19}
F'(\eta)=\eta^{-1}r\left(\frac{I_1(\eta^{-1}r)}{I_0(\eta^{-1}r)}-\theta_+\right)\left(\frac{I_1(\eta^{-1}r)}{I_0(\eta^{-1}r)}-\theta_-\right),
\end{equation}
where
\begin{align*}
\theta_+=(\eta^{-1}r)^{-1}\left(-1+\sqrt{1+(\eta^{-1}r)^2}\right),\\
\theta_-=(\eta^{-1}r)^{-1}\left(-1-\sqrt{1+(\eta^{-1}r)^2}\right).
\end{align*}
From Lemma \ref{lem1}, it implies $\frac{I_1(\eta^{-1}r)}{I_0(\eta^{-1}r)}>\theta_+$. Moreover, notice the positive of $I_n(r)$ for $r>0$, it deduces
$\frac{I_1(\eta^{-1}r)}{I_0(\eta^{-1}r)}-\theta_->0$. Then $F'(\eta)>0$ and $F'(\eta)$ is strictly increasing as asserted.
\end{proof}

In order to get the uniqueness and nonuniqueness for piecewise constant potential reconstruction, we introduce the notations from \cite{Jaeger1941}:
\begin{align}\label{2.23}
&D(x,y)=I_0(x)K_0(y)-K_0(x)I_0(y),\\
&D_{r,s}(x,y)=\frac{\partial^{r+s}}{\partial x^r\partial y^s}D(x,y).
\end{align}

Then the DN map can be rewritten as
\begin{equation}\label{2.27}
\Lambda_{\sigma_1,r_1}(f)=\frac{I_0\left(\frac{r_1}{\sqrt{\sigma_1}}\right)D_{1,1}(1,r_1)-\sqrt{\sigma_1}I_1\left(\frac{r_1}
{\sqrt{\sigma_1}}\right)D_{1,0}(1,r_1)}{I_0\left(\frac{r_1}{\sqrt{\sigma_1}}\right)D_{0,1}(1,r_1)-\sqrt{\sigma_1}I_1\left(\frac{r_1}
{\sqrt{\sigma_1}}\right)D(1,r_1)}f.
\end{equation}

The following properties are trivial, and also from \cite{Jaeger1941}.
\begin{align}
&D(x,y)D_{1,0}(x,z)-D(x,z)D_{1,0}(x,y)=x^{-1}D(z,y),\label{2.28}\\
&D(x,y)D_{1,1}(x,z)-D_{0,1}(x,z)D_{1,0}(x,y)=x^{-1}D_{1,0}(z,y),\\
&D_{1,1}(x,y)D_{0,1}(x,z)-D_{0,1}(x,y)D_{1,1}(x,z)=-x^{-1}D_{1,1}(z,y),\\
&D_{0,1}(x,y)=-D_{1,0}(y,x),\\
&D_{1,0}(x,x)=x^{-1}\label{2.32}
\end{align}

Then, the uniqueness theorem can be expressed as follows
\begin{thm}\label{th2}
(Uniqueness) For arbitrary fixed $r_1\in(0,1)$, and any $\sigma_j>0$, $j=1,2$, assume that $\Lambda_{\sigma_1,r_1}(f)=\Lambda_{\sigma_2,r_1}(f)$. Then,\\
\begin{equation}\label{2.33}
\sigma_1=\sigma_2.
\end{equation}
\end{thm}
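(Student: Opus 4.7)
The plan is to exploit the Möbius structure of the DN map in the representation (\ref{2.27}) and reduce the hypothesis $\Lambda_{\sigma_1,r_1} = \Lambda_{\sigma_2,r_1}$ to a Wronskian-type identity among the functions $D$, $D_{1,0}$, $D_{0,1}$, $D_{1,1}$, then close with the monotonicity supplied by Lemma \ref{lem2}. First I would introduce the scalar
$$G(\sigma) = \frac{I_0\left(r_1/\sqrt{\sigma}\right)}{\sqrt{\sigma}\,I_1\left(r_1/\sqrt{\sigma}\right)},$$
which is well defined since $I_1>0$ on $(0,\infty)$, and factor $\sqrt{\sigma_1}\,I_1(r_1/\sqrt{\sigma_1})$ out of the numerator and denominator of (\ref{2.27}). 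This rewrites the DN multiplier as the Möbius transformation
$$\frac{G(\sigma_1)\,D_{1,1}(1,r_1) - D_{1,0}(1,r_1)}{G(\sigma_1)\,D_{0,1}(1,r_1) - D(1,r_1)},$$
so the entire $\sigma_1$-dependence is packaged into the single scalar $G(\sigma_1)$.

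Next, assuming $\Lambda_{\sigma_1,r_1} = \Lambda_{\sigma_2,r_1}$ and writing $G_j = G(\sigma_j)$, I would cross-multiply and expand. The $G_1 G_2$ terms and the pure constant terms cancel, leaving
$$(G_2 - G_1)\bigl(D_{1,1}(1,r_1)\,D(1,r_1) - D_{1,0}(1,r_1)\,D_{0,1}(1,r_1)\bigr) = 0.$$
The second factor is an instance of the second identity displayed in (\ref{2.28})--(\ref{2.32}), applied with $x = 1$ and $y = z = r_1$; its right-hand side equals $x^{-1}D_{1,0}(r_1,r_1) = r_1^{-1}\neq 0$ by (\ref{2.32}). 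Hence $G_1 = G_2$.

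To convert $G_1 = G_2$ into $\sigma_1 = \sigma_2$, I would invoke Lemma \ref{lem2} with $r = r_1$ and the change of variable $\eta = \sqrt{\sigma}$: the function $F(\eta) = \eta I_1(\eta^{-1}r_1)/I_0(\eta^{-1}r_1)$ is strictly increasing on $(0,\infty)$, so $\sigma \mapsto G(\sigma) = 1/F(\sqrt{\sigma})$ is strictly decreasing, hence injective, and $G_1 = G_2$ forces $\sigma_1 = \sigma_2$. The main obstacle I anticipate is the algebraic bookkeeping in the middle step: one must recognize that, after cross-multiplication, the bilinear combination of $D$-quantities multiplying $(G_2 - G_1)$ is precisely one of the Jaeger Wronskian relations, and that evaluating it at the matched arguments $(1, r_1, r_1)$ produces the nonzero constant $r_1^{-1}$. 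Once that observation is in place, the rest follows routinely from Lemma \ref{lem2}.
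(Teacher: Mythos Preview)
Your proposal is correct and follows essentially the same route as the paper: equate the two expressions from (\ref{2.27}), use the Jaeger identities (\ref{2.28})--(\ref{2.32}) to reduce the equality to $F(\sqrt{\sigma_1})=F(\sqrt{\sigma_2})$ (equivalently $G(\sigma_1)=G(\sigma_2)$), and finish with Lemma~\ref{lem2}. The only difference is that you spell out the ``straightforward calculation'' the paper leaves implicit, by naming the M\"obius variable $G(\sigma)$ and identifying the nonvanishing factor $D_{1,1}(1,r_1)D(1,r_1)-D_{1,0}(1,r_1)D_{0,1}(1,r_1)=r_1^{-1}$ via the second identity with $x=1$, $y=z=r_1$.
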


\begin{proof}
Since $\Lambda_{\sigma_1,r_1}(f)=\Lambda_{\sigma_2,r_1}(f)$, by expression formula (\ref{2.27}), we find
\begin{align*}
&\frac{I_0\left(\frac{r_1}{\sqrt{\sigma_1}}\right)D_{1,1}(1,r_1)-\sqrt{\sigma_1}I_1\left(\frac{r_1}
{\sqrt{\sigma_1}}\right)D_{1,0}(1,r_1)}{I_0\left(\frac{r_1}{\sqrt{\sigma_1}}\right)D_{0,1}(1,r_1)-\sqrt{\sigma_1}I_1\left(\frac{r_1}
{\sqrt{\sigma_1}}\right)D(1,r_1)}\\
=&\frac{I_0\left(\frac{r_1}{\sqrt{\sigma_2}}\right)D_{1,1}(1,r_1)-\sqrt{\sigma_2}I_1\left(\frac{r_1}
{\sqrt{\sigma_2}}\right)D_{1,0}(1,r_1)}{I_0\left(\frac{r_1}{\sqrt{\sigma_2}}\right)D_{0,1}(1,r_1)-\sqrt{\sigma_2}I_1\left(\frac{r_1}
{\sqrt{\sigma_2}}\right)D(1,r_1)}.
\end{align*}
From (\ref{2.28})-(\ref{2.32}), then by straightforward calculation, we derive that
\begin{equation*}\label{}
\sqrt{\sigma_1}\frac{I_1(\frac{r_1}{\sqrt{\sigma_1}})}{I_0(\frac{r_1}{\sqrt{\sigma_1}})}
=\sqrt{\sigma_2}\frac{I_1(\frac{r_1}{\sqrt{\sigma_2}})}{I_0(\frac{r_1}{\sqrt{\sigma_2}})},
\end{equation*}
and therefore, by using monotonicity Lemma \ref{lem2}, it deduces $\sigma_1=\sigma_2$.
\end{proof}

\section{Nonuniqueness for the potential reconstruction in core-shell structure}
In this section, we prove the nonuniqueness of potential reconstruction problem, when the radius $r_1$ and potential coefficient $\sigma_1$ satisfy some conditions.
\begin{thm}\label{th3}
(Nonuniqueness) Assume that $r_j\in(0,1)$, $\sigma_j>0$, $j=1,2$, furthermore $\{r_1, \sigma_1\}$ and $\{r_2, \sigma_2\}$ satisfy
\begin{equation}\label{2.34}
D(r_1, \sigma_1, r_2, \sigma_2)=0.
\end{equation}
Then,
\begin{equation}\label{2.35}
\Lambda_{\sigma_1,r_1}=\Lambda_{\sigma_2,r_2},
\end{equation}
where
\begin{equation}\label{cy}
D(r_1, \sigma_1, r_2, \sigma_2)=
\left|\begin{array}{cccc}
D_{1,0}(r_1,r_2)&\sqrt{\sigma_1}I_1\left(\frac{r_1}
{\sqrt{\sigma_1}}\right)&D_{1,1}(r_1,r_2)\\
I_0\left(\frac{r_2}{\sqrt{\sigma_2}}\right)&0&\sqrt{\sigma_2}I_1\left(\frac{r_2}
{\sqrt{\sigma_2}}\right)\\
D(r_1,r_2)&I_0\left(\frac{r_1}{\sqrt{\sigma_1}}\right)&D_{0,1}(r_1,r_2)
\end{array}\right|.
\end{equation}
\end{thm}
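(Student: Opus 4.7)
The plan is to start from the explicit multiplier formula (\ref{2.27}) for the DN map and reduce the condition $\Lambda_{\sigma_1,r_1}=\Lambda_{\sigma_2,r_2}$ to the vanishing of the $3\times 3$ determinant in (\ref{cy}) by repeated use of the algebraic identities (\ref{2.28})--(\ref{2.32}). To keep the algebra readable, I would introduce the shorthand
\[
A_j := I_0\!\left(\tfrac{r_j}{\sqrt{\sigma_j}}\right),\qquad B_j := \sqrt{\sigma_j}\,I_1\!\left(\tfrac{r_j}{\sqrt{\sigma_j}}\right),\qquad j=1,2,
\]
so that (\ref{2.27}) reads $\Lambda_{\sigma_j,r_j}(f)=\bigl[A_j D_{1,1}(1,r_j)-B_j D_{1,0}(1,r_j)\bigr]\bigl[A_j D_{0,1}(1,r_j)-B_j D(1,r_j)\bigr]^{-1}f$.

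Next, cross-multiplying the equality of the two multipliers and expanding gives an equation that is bilinear in $(A_1,B_1)$ and $(A_2,B_2)$; after grouping by the four products $A_1A_2$, $A_1B_2$, $A_2B_1$, $B_1B_2$, the four bracketed coefficients involve only the quantities $D(1,r_j)$, $D_{1,0}(1,r_j)$, $D_{0,1}(1,r_j)$, $D_{1,1}(1,r_j)$. Each bracket can then be collapsed to a single term at argument $(r_1,r_2)$ using (\ref{2.28})--(\ref{2.31}) with $x=1$, $y=r_1$ or $r_2$, $z=r_2$ or $r_1$: e.g.\ the $B_1B_2$ coefficient becomes $D(r_1,r_2)$ via (\ref{2.28}), while the $A_1B_2$ and $A_2B_1$ coefficients reduce to $-D_{1,0}(r_1,r_2)$ and $-D_{0,1}(r_1,r_2)$ respectively via (\ref{2.29}) combined with (\ref{2.31}). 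The resulting identity is exactly
\[
A_1A_2\,D_{1,1}(r_1,r_2)-A_1B_2\,D_{1,0}(r_1,r_2)-A_2B_1\,D_{0,1}(r_1,r_2)+B_1B_2\,D(r_1,r_2)=0.
\]
Finally, expanding the $3\times 3$ determinant in (\ref{cy}) along its middle column (the only nonzero entries being $B_1$ at position $(1,2)$ and $A_1$ at position $(3,2)$) gives precisely this linear combination, so the displayed condition is equivalent to $D(r_1,\sigma_1,r_2,\sigma_2)=0$, which proves the theorem.

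The main technical obstacle is bookkeeping: the four coefficients must be reduced using (\ref{2.28})--(\ref{2.31}) with the correct assignment of variables, and one needs the antisymmetry $D_{1,1}(y,x)=-D_{1,1}(x,y)$ (which follows directly from the definition $D_{1,1}(x,y)=K_1(x)I_1(y)-I_1(x)K_1(y)$ using $I_0'=I_1$ and $K_0'=-K_1$) to turn the coefficient of $A_1A_2$ obtained from (\ref{2.30}) into $D_{1,1}(r_1,r_2)$ with the right sign. Once the sign accounting is done carefully, matching the final four-term expression against the cofactor expansion of (\ref{cy}) is immediate.
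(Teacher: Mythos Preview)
Your proposal is correct and follows essentially the same route as the paper: the paper's proof simply asserts that, by applying the identities (\ref{2.28})--(\ref{2.32}), condition (\ref{2.34}) is equivalent to the equality of the two DN multipliers in (\ref{2.27}), and you have written out precisely how that reduction goes (cross-multiply, collect by $A_1A_2,A_1B_2,A_2B_1,B_1B_2$, collapse each bracket with the appropriate identity, then match against the cofactor expansion of the determinant along the middle column). Your sign bookkeeping---in particular the use of the antisymmetry $D_{1,1}(y,x)=-D_{1,1}(x,y)$ for the $A_1A_2$ term and of (\ref{2.31}) for the $A_2B_1$ term---is accurate.
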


\begin{proof}
Similar to the proof of Theorem \ref{th2}. the condition (\ref{2.34}) is equivalent to
\begin{align*}
&\frac{I_0\left(\frac{r_1}{\sqrt{\sigma_1}}\right)D_{1,1}(1,r_1)-\sqrt{\sigma_1}I_1\left(\frac{r_1}
{\sqrt{\sigma_1}}\right)D_{1,0}(1,r_1)}{I_0\left(\frac{r_1}{\sqrt{\sigma_1}}\right)D_{0,1}(1,r_1)-\sqrt{\sigma_1}I_1\left(\frac{r_1}
{\sqrt{\sigma_1}}\right)D(1,r_1)}\\
=&\frac{I_0\left(\frac{r_2}{\sqrt{\sigma_2}}\right)D_{1,1}(1,r_2)-\sqrt{\sigma_2}I_1\left(\frac{r_2}
{\sqrt{\sigma_2}}\right)D_{1,0}(1,r_2)}{I_0\left(\frac{r_2}{\sqrt{\sigma_2}}\right)D_{0,1}(1,r_2)-\sqrt{\sigma_2}I_1\left(\frac{r_2}
{\sqrt{\sigma_2}}\right)D(1,r_2)}.
\end{align*}
Here we have used the properties (\ref{2.28})-(\ref{2.32}). Then, by the definition of DN map, it follows that
\begin{equation*}
\Lambda_{\sigma_1,r_1}=\Lambda_{\sigma_2,r_2}.
\end{equation*}

\end{proof}

\begin{rem}
In fact, (\ref{2.34}) is the sufficient and necessary condition for equation $\Lambda_{\sigma_1,r_1}=\Lambda_{\sigma_2,r_2}$.
\end{rem}

\section{Numerical experiments and discussions}
\subsection{Potential reconstruction by using Tikhonov regularization}
\label{sec:Numerical-results}
In this subsection, we apply Tikhonov regularization method to recover the potential, then present some numerical examples to illustrate the effectiveness of algorithm and to verify the nonuniqueness result.

By setting
\[
f=1  \quad \text{and}  \quad \psi|_{r=0} \quad \text{is a bounded value}
\]
in system \eqref{2.6}. We choose the step $\Delta r=0.0001$, and use the finite difference method to solve the forward problem in the core-shell structure. Then the Neumann data $g$ can be obtained by using the following forward difference scheme
\[
g=\frac{\partial\psi}{\partial r}\bigg|_{r=1}\approx\frac{\tilde{\psi}(1)-\tilde{\psi}(1-\Delta r)}{\Delta r},
\]
where $\tilde{\psi}$ is the finite difference solution of system \eqref{2.6}.

The noisy data are generated by
\begin{equation}\label{5.36}
g^{\delta}=g+\delta\zeta,
\end{equation}
here $\zeta$ is is a Gaussian random variable with zero mean and unit standard deviation and $\delta$ indicates
the noise level.

By using the noisy data $g^{\delta}$ generated from (\ref{5.36}), we introduce the following Tikhonov regularization functional \cite{CR.Vogel-2002}
\begin{equation}\label{regular-functional}
 T_\alpha(\sigma_1)= \frac{1}{2}|\Lambda_{\sigma_1}(f)-g^{\delta}|^2+\frac{1}{2}\alpha\sigma_1^2,
\end{equation}
where $\alpha$ is a regularization parameter, which will be selected by the discrepancy principle \cite{CR.Vogel-2002} in our numerical simulation. The regularization solution is defined as the minimizer of functional (\ref{regular-functional}), i.e., regularization solution
\begin{equation}\label{5.38}
\sigma_{1}^{\alpha,\delta}=\arg\min_{\sigma_1>0} T_\alpha(\sigma_1).
\end{equation}
We will use the Newton's algorithm to find the minimizer (regularization solution). To show the accuracy of numerical solutions, we compute the absolute error denoted by
\[
\varepsilon_{abs}=|\sigma_{1}^{\alpha,\delta}-\sigma_{1}|,
\]
here $\sigma_{1}$ is the exact potential coefficient.



\begin{exm}\label{exm:1}
Consider system \eqref{2.6} with $r_{1}=0.7$, $\sigma_{1}=0.9$, $\psi|_{r=0}=\frac{692}{887}$.


\begin{table}[!htbp]
\caption{Numerical results for Example 5.1 with different noise level $\delta$.}\label{table 0.7}
  \centering
  \begin{tabular}{cccc}
  \hline
  $\delta$&$\alpha$&$\sigma_1^{\alpha,\delta}$&$\varepsilon_{abs}$\\\hline
  0.1 & 1.36e-03 & 0.9007  & 7.4295e-04\\\hline
  0.01 & 1.29e-04 & 0.9003 & 3.3653e-04
  \\\hline
  0.001 & 5.4e-06 & 0.9002 & 2.4817e-04
  \\\hline
  \end{tabular}
\end{table}
\end{exm}

\begin{exm}\label{exm:2}
Let $r_{1}=0.8$, $\sigma_{1}=1.5$, $\psi|_{r=0}=\frac{199}{240}$ in system \eqref{2.6}

\begin{table}[!htbp]
\caption{Numerical results for Example 5.2 with different noise level $\delta$.}\label{table 0.8}
  \centering
  \begin{tabular}{cccc}
  \hline
  $\delta$&$\alpha$&$\sigma_1^{\alpha,\delta}$&$\varepsilon_a$\\\hline
  0.1 & 5.56e-04 & 1.4999  & 1.3517e-04\\\hline
  0.01 & 5.19e-05 & 1.5003 & 2.9780e-04
  \\\hline
  0.001 & 2.1e-06 & 1.5000 & 4.6362e-05
  \\\hline
  \end{tabular}
\end{table}
\end{exm}

%

From Table 1 and Table 2, it shows that the regularization solution $\sigma_1^{\alpha,\delta}$ is in excellent agreement with
the exact solution $\sigma_1$, even though the noisy level $\delta$ reach 10\%. The numerical results also confirm that one measurement data is enough to determine the unknown potential $\sigma_1$ uniquely.


\subsection{Verifying the nonuniqueness result}
In this subsection we will verify the nonuniqueness result (Theorem 4.1). In fact, we first fixed $r_{1}$, $r_{2}$, $\sigma_{1}$, and then find $\sigma_{2}$ from equation (\ref{2.34}). According to Theorem 4.1, the corresponding DN map must be same. Notice that the DN map $\Lambda_{\sigma_1,r_1}:\ \mathbb{C}\rightarrow\mathbb{C}$ is a linear multiplier operator. Hence, we only need to compare $\Lambda_{\sigma_1,r_1}(f)$ with $\Lambda_{\sigma_2,r_2}(f)$ with $f=1$. Furthermore, we introduce absolute error $\varepsilon_{abs}(\Lambda)=|\Lambda_{\sigma_1,r_1}(1)-\Lambda_{\sigma_2,r_2}(1)|$ to show the difference between $\Lambda_{\sigma_1,r_1}$ and $\Lambda_{\sigma_2,r_2}$. In the following, we still use the finite difference method to calculate $\Lambda_{\sigma_1,r_1}(1)$ and $\Lambda_{\sigma_2,r_2}(1)$, and the difference step $\Delta r=\frac{1}{N}$.

\begin{exm}\label{exm:4}
Fixed $r_{1}=0.3$, $r_{2}=0.7$, $\sigma_{1}=2$, $\psi|_{r=0}=\frac{852}{1067}$, by solving equation (\ref{2.34}), we find $\sigma_{2}\approx1.0161$.

\begin{table}[h]
\caption{Numerical results for Example 5.3 with errors versus different $N$.}\label{main1}
  \centering
  \begin{tabular}{cccc}
  \hline
  $N$&$\Lambda_{\sigma_1,r_1}(1)$&$\Lambda_{\sigma_2,r_2}(1)$&$\varepsilon_{abs}(\Lambda)$\\\hline
  100 & 0.4431 & 0.4409 & 2.2122e-03\\\hline
  200 & 0.4448 & 0.4438 & 1.0481e-03
  \\\hline
  400 & 0.4457 & 0.4452 & 5.1991e-04
  \\\hline
  800 & 0.4461 & 0.4459 & 2.5845e-04
  \\\hline
  \end{tabular}
\end{table}


\end{exm}

\begin{exm}\label{exm:5}
Fixed $r_{1}=0.8$, $r_{2}=0.4$, $\sigma_{1}=0.5$, $\psi|_{r=0}=\frac{1139}{1658}$, we find $\sigma_{2}\approx0.0373$.

\begin{table}[!htbp]
\caption{Numerical results for Example 5.4 with errors versus different $N$.}\label{main2}
  \centering
  \begin{tabular}{cccc}
  \hline
  $N$&$\Lambda_{\sigma_1,r_1}(1)$&$\Lambda_{\sigma_2,r_2}(1)$&$\varepsilon_{abs}(\Lambda)$\\\hline
  100 & 0.4234 & 0.4254 & 2.0125e-03\\\hline
  200 & 0.4267 & 0.4278 & 1.0036e-03
  \\\hline
  400 & 0.4284 & 0.4289 & 5.0138e-04
  \\\hline
  800 & 0.4293 & 0.4295 & 2.5071e-04
  \\\hline
  \end{tabular}
\end{table}


\end{exm}

\section{Conclusions}
In this article, we considered a potential reconstruction problem in quantum fields from one boundary measurement data. We proved the corresponding uniqueness theorem and non-uniqueness result. Based on the Tikhonov regularization scheme, some numerical examples are given to confirm the theoretical analysis.

\section*{Acknowledgments}
The work described in this paper was supported by the NSF of China (11301168).

\end{document}